\newtheorem{theorem}{Theorem}[section]
\newtheorem{lemma}[theorem]{Lemma}
\newtheorem{remark}[theorem]{Remark}
\def\EE{\mathbb{E}}\def\FF{\mathbb{F}}\def\HH{\mathbb{H}}
\def\LL{\mathbb L}
\def\RR{\mathbb{R}}\def\SS{\bb S}
\def\cL{{\cal L}}
\def\cD{{\cal D}}\def\cF{{\mathcal F}}\def\cG{{\cal G}}
\def\cL{{\cal L}}
\def\cP{{\cal P}}\def\cQ{{\cal Q}}\def\cR{{\cal R}}\def\cU{{\cal U}}
\def\al{{\alpha}}\def\be{{\beta}}\def\de{{\delta}}
\def\ep{{\epsilon}}\def\ga{{\gamma}}
\def\la{{\lambda}}\def\si{{\sigma}}
\def\Om{{\Omega}}
\def\<{\left<}\def\>{\right>}\def\({\left(}\def\){\right)}
\def\Om{{\Omega}}
\def\<{\left<}\def\>{\right>}\def\({\left(}\def\){\right)}
\font\tenmsbm=msbm10\textfont
\font\sevenmsbm=msbm7
\def\bb#1{{\fam\msbmfam #1}}
\def\HH{\mathbb H}
\def\al{{\alpha}}\def\be{{\beta}}\def\de{{\delta}}
\def\ep{{\epsilon}}\def\ga{{\gamma}}
\def\la{{\lambda}}\def\si{{\sigma}}
\def\Om{{\Omega}}
\def\<{\left<}\def\>{\right>}\def\({\left(}\def\){\right)}
\def\cF{{\cal F}}\def\cG{{\cal G}}
\def\cL{{\cal L}}\def\cP{{\cal P}}
\begin{document}

\title{Mean-field stochastic linear quadratic control problem with random coefficients\thanks{This work was supported by
the National Key R\&D Program of China (2022YFA1006102), and
the National Natural Science Foundation of China (12471418, 12326368).}}

\author{Jie Xiong\thanks{Department of Mathematics and Shenzhen International Center for Mathematics,
		Southern University of Science and Technology, Shenzhen 518055, China
		(xiongj@sustech.edu.cn).}
\and Wen Xu\thanks{Department of Mathematics,
	Southern University of Science and Technology, Shenzhen 518055, China
	(12231279@mail.sustech.edu.cn).
}}


\maketitle

\begin{abstract}
In this paper, we first prove that the mean-field stochastic linear quadratic (MFSLQ for short) control problem with random coefficients has a unique optimal control
and derive a preliminary stochastic maximum principle to characterize this optimal control by an optimality system.
 However, because of the term of the form 
$\mathbb{E}[A_1(\cdot)^\top Y(\cdot)] $ in the adjoint equation, which cannot be represented in the form
$\mathbb{E}[A_1(\cdot)^\top]\mathbb{E} [Y(\cdot)] $,   we cannot solve this optimality system explicitly. To this end, we decompose the MFSLQ control 
problem into two  problems without the mean-field terms, and one of them is a constrained problem. The constrained SLQ control problem
is solved explicitly 
by an extended LaGrange multiplier method developed in this article.

\end{abstract}

\text{\bf Keywords}: Extended LaGrange multiplier method, mean-field control, linear quadratic control problem, random coefficient, Riccati equation.

\text{\bf AMS Subject Classification}: 49N10, 60H10, 93E20.

\baselineskip=18pt

\section{Introduction}
\setcounter{equation}{0}
\renewcommand{\theequation}{\thesection.\arabic{equation}}

Let $(\Omega,\mathcal{F},\mathbb{F},\mathbb{P})$ be a complete filtered probability space on which  an
 one-dimensional standard Brownian motion $\{W(t); 0 \leq t < \infty\}$ is defined, where  $\mathbb{F} = \{\mathcal{F}_t\}_{t\geq 0}$ is the natural   filtration generated by $W(\cdot)$  augmented by all the $\mathbb{P}$-null sets.    We consider the following controlled  linear   mean-field    stochastic  differential equation (MFSDE  for short) with random coefficients:
\begin{equation}\label{OState1}
	\begin{cases}
		dX(s)=[A(s)X(s)+A_1(s)\mathbb{E}X(s)+B(s)u(s)]ds\\
		\quad \quad \quad \quad +[C(s)X(s)+C_1(s)\mathbb{E}X(s)+D(s)u(s)]dW(s), & s\in [0,T],\\
		X(0)=x, \\
	\end{cases}
\end{equation}
where $A(\cdot), A_1(\cdot), C(\cdot), C_1(\cdot): [0,T]\times \Omega \to \mathbb{R}^{n\times n}$ and $ B(\cdot), D(\cdot) : [0,T]\times \Omega \to  \mathbb{R}^{n\times m}$, are matrix-valued $\cF_t$-adapted processes, and  $u(\cdot)$ is an $\cF_t$-adapted process satisfying 
$\mathbb{E}\int_0^T|u(s)|^2ds \linebreak[4]< \infty$. The initial state $x \in \RR^n$ will be fixed throughout this article.
 In the above, $X(\cdot)$  valued in $\mathbb{R}^n$  is the state process, and $u(\cdot)$  valued in $\mathbb{R}^m$    is the control process. We will denote $X(\cdot)$ by $X^u(\cdot)$ when it is necessary to indicate the control it corresponds.

We define the following cost functional with random coefficients 
\begin{equation}\label{Cost2}
	\begin{aligned}
		J\big( u(\cdot)\big)  =&\mathbb{E}\Big\{\int_{0}^{T}\Big(\langle Q(s)X(s),X(s)\rangle+\langle Q_1(s)\mathbb{E}X(s),\mathbb{E}X(s)\rangle \\
		&+\langle R(s)u(s),u(s)\rangle \Big)ds  + \langle GX(T),X(T)\rangle\Big\}, 
	\end{aligned}
\end{equation}
where  $Q(\cdot), Q_1(\cdot) : [0,T]\times \Omega \to \mathbb{R}^{n\times n} $,  $R(\cdot) :  [0,T]\times \Omega \to \mathbb{R}^{m\times m}$, and $G$ is an $\mathcal{F}_T$-measurable random matrix. For a control $u(\cdot)$ belonging to the following space
$$
\mathcal{U} =  \left\{u(\cdot):[0, T] \times \Omega \rightarrow \mathbb{R}^m \mid u(\cdot)  \text { is $\cF_t$-adapted and }
 \mathbb{E} \int_0^T|u(t)|^2 d t<\infty\right\},
$$
the MFSLQ  optimal control problem with random coefficients  can be stated as follows:

\vspace{2mm}
\textbf{Problem (MFSLQ)}:
{\em Find a control $u^\ast(\cdot) \in \mathcal{U}$ such that
\begin{equation}\label{aimc}
	J\big( u^\ast(\cdot)\big)= \inf_{u(\cdot) \in \mathcal{U}}J\big( u(\cdot)\big).
\end{equation} 
 The process $u^\ast(\cdot)$ is called an optimal control of Problem (MFSLQ). The corresponding  process $X^\ast (\cdot )$ is called an  optimal state, and  $\big(X^\ast(\cdot), u^\ast (\cdot)\big)$ an  optimal pair.}

MFSDEs were initially used to describe physical systems involving a large number of interacting particles. The complexity in the dynamics of an SDE   is reduced by replacing the interactions of all particles with their expectation. 
 The study of the optimal control problem of the mean-field system has gained popularity in the last ten years since Buchdahn $et\ al$ (\cite{Buckdahn2009}, \cite{Buckdah2009}) and Carmona and Delarue (\cite{Carmona2013a}, \cite{Carmona2013b}, \cite{Carmona2015})
  introduced the mean-field  backward stochastic differential equation (BSDE for short) and mean-field forward-backward stochastic differential 
  equation (FBSDE for short). However, there is a shortage of literature on studying  Problem (MFSLQ) with random coefficients. 
    In the following, we will discuss  the degeneration
     of this problem and the development of the related research. These results  provide an affluent theoretical 
     foundation for  our research.

Problem (MFSLQ) degenerates into a classical SLQ control problem when the mean-field terms both in the state equation (\ref{OState1}) and in the cost functional (\ref{Cost2}) vanish and all the coefficients are deterministic. The study of the SLQ problem  for deterministic coefficients   can be  traced back to the work of    
 Kalman \cite{kalman19601} and  the related work of Kushner \cite{Kushner1962}, Davis \cite{Davis1977}, and Wonham \cite{Wonham1968}.   It is well known that  this  theory is rather well-developed under the assumptions that the weighting matrix $R$ is positive definite and the weighting matrices  $G$ and $Q$  are positive semi-definite. In this case, there is only one optimal control, which can be represented  in  the state feedback form by the  solution of the Riccati equation; see the book \cite{Yong1999}   for more details.     

The next step is to consider  Problem (MFSLQ) without mean-field terms but with  random coefficients.  
 To our best knowledge, the  SLQ  problem  with random coefficients at least  dates back to Bismut in the
1970s, see \cite{Bismut1976} and \cite{Bismut1978}, {\color{blue} where he derived the associated Riccati equation as a matrix-valued system of quadratic BSDEs, and left its existence and uniqueness as a big challenging open problem. Subsequently in 1999, the existence and uniqueness of this stochastic Riccati equation, was listed as one open problem on BSDEs in \cite{Peng1998}, and it was completely solved in 2003 by Tang \cite{tang2003} with the help of the associated forward-backward stochastic Hamilton flows (see also Kohlmann and Tang \cite{KT1}, \cite{KT2} for a different method and an overview on this open problem).  
  The open problem itself  and the  presentation in  \cite{tang2003} were for the regular case. }The singular 
case was mentioned at the end of \cite{tang2003} and solved in \cite{tang2002}.
The extension to the indefinite situation was given by Sun $et\ al$ \cite{sun2021}.   We   emphasize that the  work \cite{tang2003}  on the SLQ problem with random coefficients plays an important role in our current study. 

 For the case where all the coefficients are deterministic, Problem (MFSLQ)  is reduced to the case covered by Yong  \cite{Yong2013}.  
 It was
  pointed out there  that the deterministic  coefficients  have  played an essential role in dealing with the problem, and there is a
    lack of  method to deal with the case with random coefficients.  Moreover, many authors have made contributions to the general deterministic setup,  where the state equation is a nonlinear SDE,  and  the cost functional is also non-quadratic.   For example, the work of   Buckdahn $et\ al$ \cite{Buckdahn2011},  Andersson and Djehiche \cite{Andersson 2011}, Meyer-Brandis $et\ al$ \cite{Meyer2012}, to name just a few. 
 
We point out that  the most  relevant work to the current setting   is  Pham \cite{Pham2016}, 
 studying the  SLQ optimal control of $\FF^0$-conditional  MFSDE with random coefficients, where  $\mathbb{F}^0$ is a sub-filtration of  $\mathbb{F}$,
  which is generated  by another Brownian motion $W^0$. However,  these random coefficients are assumed to be $\mathbb{F}^0$-adapted processes.   Notice that the main challenge of   Problem (MFSLQ)  is that   the term of the form $\mathbb{E}[A_1(\cdot)^\top Y(\cdot)] $ appears in the adjoint equation, and it cannot be separated into the form $\mathbb{E}[A_1(\cdot)^\top]\mathbb{E} [Y(\cdot)] $. In the $\FF^0$-conditional mean-field SLQ control problem,  the assumption on  coefficients     essentially avoids such difficulty mentioned 
above since $\EE[A_1(s)^\top Y(s)|\cF^0_s]=A_1(s)^\top\EE[Y(s)|\cF^0_s]$.   Another related work is Mei $et\; al$ \cite{MWY} where $ \mathbb{F}^0$ is 
generated by a Markov chain.

The main contributions of this paper are as follows. 
 First, we establish the stochastic maximum principle (SMP for short) for  Problem (MFSLQ), including the existence and uniqueness of an optimal control.
 More importantly, we propose a new method for studying mean-field control problems. Namely, we decompose
  the MFSLQ problem into two  problems without mean-field terms. The first one is an SLQ control problem with the constraint that the expectation process is a given deterministic function, while the second is an SLQ with deterministic control process. 
  Finally, we solve the constrained problem by an extended LaGrange multiplier (ELM for short) method proposed in this article. 
  We believe that this new approach can be applied to many other mean-field problems.

The rest of the paper is organized as follows. In Section \ref{sec2},  we introduce some spaces and present the main results.
Then, in Section \ref{sec3}, we prove that Problem (MFSLQ) has a unique optimal control, and it satisfies an optimality system.
 Section \ref{sec4} is devoted to  the ELM method to represent the constrained 
 optimal control as a functional of the deterministic function, which is
 the constraint of the expectation process.  We solve the SLQ with deterministic control of the frozen mean-field in Section \ref{sec6}.  Finally, we give some concluding remarks
in Section \ref{sec7}. Throughout this article, we will use $K$ to represent a positive constant whose value can be different from place to place.

\section{Main results}\label{sec2}
\setcounter{equation}{0}
\renewcommand{\theequation}{\thesection.\arabic{equation}}

In this section, we proceed to presenting the main results of this article. First, we introduce some notations and conditions.
Denote the collection of all $n\times n$ positive semi-definite matrices by $\SS^n_+$. For Euclidean space $\mathbb{H} = \mathbb{R}^n,  \mathbb{R}^{m\times n},   \SS^n_+$,   we introduce the following spaces. 
\begin{itemize}
\item $L^{2,c}_\FF(\HH)\equiv L_{\mathbb{F}}^{2}\(\Omega; C([0,T]; \mathbb{H}) \)$: the space of continuous $\cF_t$-adapted processes
	$X: [0, T]\times \Omega \to \mathbb{H}$ with $\mathbb{E}\left[\text{sup}_{0 \leq s \leq T}  \|X(s,\omega)\|_\HH^{2}\right]< \infty$.

\item $L^{p,q}_{\FF}(\HH)\equiv L^p_{\FF}\(\Om;L^q([0,T];\HH)\)$: the space of  $\cF_t$-adapted processes
	$X: [0, T]\times \Omega \to \mathbb{H}$ with $$\mathbb{E}\(\int^{T}_{0}  \|X(s,\omega)\|_\HH^{q}ds\)^p< \infty.$$
Especially, we denote $L^2_{\FF}(\HH)\equiv L^{1,2}_{\FF}(\HH)$.

\item $L^\infty_\FF(\HH)\equiv L_{\mathbb{F}}^{\infty}(0, T;  \mathbb{H})$: the space of  $\cF_t$-adapted $  \mathbb{H}$-valued  bounded processes. 

\item $L^{\infty,c}_\FF(\HH)$: the space of  $\cF_t$-adapted $  \mathbb{H}$-valued  bounded continuous processes.

\item $L_{\cG}^{2}(\mathbb{H})$: the space of $\cG$-measurable $\mathbb{H}$-valued square integrable  random variables, where 
$\cG\subset\cF$ is a sub-$\si$-field in $\Om$.

\item $L_{\cG}^{\infty}(\mathbb{H})$: the space of $\cG$-measurable $\mathbb{H}$-valued bounded  random variables.

\item $\LL^2\equiv L^2([0, T]; \mathbb{R}^n)$: the space of deterministic $\RR^n$-valued square-integrable  functions on $[0, T]$.
\end{itemize}

Throughout this article, we  impose the following assumptions:
\begin{itemize}
	\item[$\mathbf{(H1):}$]
 $ A(\cdot), A_1(\cdot), C(\cdot), C_1(\cdot) \in L_{\FF}^{\infty}(\mathbb{R}^{n\times n}),$ and 
$B(\cdot), D(\cdot) \in  L_{\FF}^{\infty}(\mathbb{R}^{n\times m})$. 
\item [$\mathbf{(H2)}:$] $Q(\cdot), Q_1(\cdot) \in L_{\FF}^{\infty}( \mathbb{S}_{+}^n),$   and $  R(\cdot) \in L_{\FF}^{\infty}\big( \mathbb{S}_{+}^m),$ $G  \in L_{\mathcal{F}_T}^{\infty}(\mathbb{S}_{+}^n).  $  Moreover, there exists a constant $\delta > 0$ such that
$$
R (s)\geq \delta I_{m}, \; a.e.\; s\in [0, T], a.s.,
$$
where $I_m$ is the $m\times m$ identity matrix.

\end{itemize}

The following result is a preliminary SMP.
\begin{theorem}\label{SMP1}
	Let  (H1)  and (H2)  hold. Then, Problem  (MFSLQ) has a unique optimal control.  Further,	
	 $u^\ast(\cdot)$ is the optimal control for  Problem  (MFSLQ)  if and only if the adapted solution 
	 $\big(X^\ast(\cdot), Y^\ast(\cdot), Z^\ast(\cdot)\big)\in\(L^{2,c}_\FF(\RR^n)\)^2\times L^2_\FF(\RR^n)$ to  the  following 	 FBSDE: for $s \in [0, T]$,
	\begin{equation}\label{FBSDEoptimal}
		\begin{cases}
			dX^*(s)=[AX^\ast+A_1\mathbb{E}X^\ast+Bu^\ast]ds+[CX^\ast+C_1\mathbb{E}X^\ast+Du^\ast]dW(s), \\
			dY^\ast(s)=-\{
			A^{\top}Y^\ast+C^{\top}Z^\ast+QX^\ast+\mathbb{E}Q_1\mathbb{E}X^\ast +\mathbb{E}[A_1^{\top}Y^\ast+C_1^{\top}Z^\ast]\}ds+Z^\ast(s)dW(s),\\
			X^\ast(0)=x, \quad Y^\ast(T)=GX^\ast(T)\\
		\end{cases}
	\end{equation}
	admits the stationary condition:
	\begin{equation}\label{eq0921c}
		R(s)u^\ast(s)+ B(s)^{\top}Y^\ast(s)+D(s)^{\top}Z^\ast(s)=0, \; a.e. \; s \in[0, T], a.s.
	\end{equation}
\end{theorem}
Here in the equation (\ref{FBSDEoptimal}) above,  we dropped the dependence of the processes on the time parameter
  $s$ in some complicated terms
 for notation simplicity. We will continue to do so when
no confusion will be raised.

As we mentioned in the introduction, the main difficulty in solving FBSDE (\ref{SMP1}) with condition (\ref{eq0921c}) is that
$\mathbb{E}[A_1(s)^{\top}Y^\ast(s)]\neq \mathbb{E}[A_1(s)^{\top}]\mathbb{E}[Y^\ast(s)]$. We will get around this difficulty by using 
an ELM method which we introduce now.

Denote by $\LL^2_0$ the set of $\al(\cdot)\in\LL^2$ such that 
\[\left\{u(\cdot)\in\cU:\; \EE X^u(\cdot)=\al(\cdot)\right\}\neq\emptyset.\]
Note that
\begin{equation}\label{eq1116c}
\inf_{u(\cdot) \in \mathcal{U}}J\big( u(\cdot)\big)=\inf_{\al(\cdot)\in \LL^2_0}\inf\left\{J(u(\cdot)):\; u(\cdot)\in\cU,\; \EE X^u(\cdot)=\al(\cdot)\right\}.
\end{equation}
First, we consider a problem, called Problem 1, with the constraint that the state process $X(\cdot)$ satisfies $\EE X(s)=\al(s)$, $\forall s\in[0,T]$, for a fixed 
deterministic function $\al(\cdot)\in\LL^2_0$. In this case, we denote the cost functional as $\hat{J}_\al(\cdot)$. Namely,
\begin{equation}\label{eq0503a}
	\begin{aligned}
		\hat J_{\alpha}\(u(\cdot)\) = & \; \mathbb{E} \Big\{ \int_{0}^{T}\Big(\langle Q(s)X(s),X(s)\rangle+\langle Q_1(s)\alpha(s),\alpha(s)\rangle +\langle R(s)u(s),u(s)\rangle \Big) ds  \\
		&+ \langle GX(T),X(T)\rangle 	\Big\}.
	\end{aligned}
\end{equation}

\begin{lemma}\label{lem1114b}
For $\al(\cdot)\in\LL^2_0$  fixed, there exists a unique $u^*_\al(\cdot)\in\cU$ such that $\EE X^{u^*_\al}(\cdot)=\al(\cdot)$ and
\[\hat{J}_\al(u^*_\al(\cdot))=\inf\left\{\hat{J}_\al(u(\cdot)):\; u(\cdot)\in\cU,\; \EE X^u(\cdot)=\al(\cdot)\right\}.\]
\end{lemma}

We relax the constraint by introducing an ELM $\la(\cdot)$, which is a deterministic function.
More specifically, fixing $\al(\cdot)\in\LL^2_0$, we consider the control problem with state equation:
\begin{equation}\label{OState}
	\begin{cases}
		dX(s)=\; [AX+Bu+A_1\alpha]ds +[CX+Du+C_1\alpha]dW(s), & s\in [0,T],\\
		X(0)\;\;= \; x, \\
	\end{cases}
\end{equation}
and the objective functional 
\[J_{\alpha}\big(u(\cdot),\la(\cdot)\big) =\hat{J}_\al(u(\cdot))+2\<\la,\EE X^u-\al\>_{\LL^2},\quad
\forall\  \lambda(\cdot)\in \LL^2,\; u(\cdot)\in\cU.\]

\vspace{2mm}
 \textbf{Problem 1}: Find a control $u_{\alpha}(\cdot) \in \mathcal{U}$ and an ELM $\la_{\al}(\cdot)\in\LL^2$ such that
 \begin{equation}\label{eq1112a}
 	D_uJ_{\alpha}\big(u_{\alpha}(\cdot),\la_\al(\cdot)\big) =0\mbox{ and }D_\la J_{\alpha}\big(u_{\alpha}(\cdot),\la_\al(\cdot)\big) =0,
 \end{equation}
where
$D_uJ_{\alpha}\big(u(\cdot),\la(\cdot)\big)\in\cU$ is the partial derivative of $J_{\alpha}$ with respect to $u(\cdot)$, namely,
\[\<D_uJ_{\alpha}\big(u(\cdot),\la(\cdot)\big),v(\cdot)\>_\cU
\equiv\lim_{\ep\to 0}\ep^{-1}\(J_\al(u(\cdot)+\ep v(\cdot),\la(\cdot))-J_\al(u(\cdot),\la(\cdot))\).\]
$D_\la J_{\alpha}\big(u(\cdot),\la(\cdot)\big)$ is defined similarly.

Problem 1 will be solved in two steps.   Fixing $\la(\cdot)\in \LL^2$ and denoting
 $J_{\alpha}\big(u(\cdot),\la(\cdot)\big)$ by $J_{\alpha,\la}\big(u(\cdot)\big)$, our first step is to  find $u_{\alpha, \lambda} (\cdot) $ satisfying
  $D J_{\alpha,\la}\big(u_{\al,\la}(\cdot)\big)=0$. 
 Once we have found the control $u_{\alpha,\la}(\cdot)$,  the second step is to search for the ELM $\lambda_{\alpha}(\cdot)$ to 
 satisfy the second part of (\ref{eq1112a}), namely, $\EE X^{u_{\alpha,\la}}(\cdot)=\al(\cdot)$. 
 
 The following Lemmas \ref{lemma3.3} and    \ref{the3.4} are key conclusions of Problem 1.
 
 \begin{lemma}\label{lemma3.3}
	Suppose that (H1) and (H2)  hold. Then, for $x \in  \RR^n$ and $\al(\cdot)\in\LL^2_0,$ $\la(\cdot)\in\LL^2$ fixed, there exists a unique 
	$u_{\al,\la}(\cdot) \in \mathcal{U}$ such that $D J_{\alpha,\la}\big(u_{\alpha,\la}(\cdot)\big) =0$. Furthermore, $u_{\al,\la}(\cdot)$ 
	is determined by  the  stationary condition
	\begin{equation}\label{stac1}
		R(s)u (s)+B(s)^{\top}Y(s)+D(s)^{\top}Z(s) = 0, \; a.e.\; s \in [0, T], \; a.s.,
	\end{equation}
	where $(X_{\al,\la}(\cdot), Y_{\al,\la}(\cdot), Z_{\al,\la} (\cdot))$ is the solution to the following 
	FBSDE: for $s \in [0, T] $,
	\begin{equation}\label{FBSDE1}
		\begin{cases}
			dX(s)=[AX+Bu+A_1\alpha]ds+[CX+Du+C_1\alpha]dW(s), \\
			dY(s)\;=- [ A^{\top}Y+C^{\top}Z+QX + \lambda] ds+Z(s)dW(s), \\
			X(0)\; \;=x, \quad Y(T)=GX(T).
		\end{cases}
	\end{equation}
	\end{lemma}

Note that in (\ref{stac1}) and (\ref{FBSDE1}) above, we have omitted the dependence of the processes
 $(u_{\al,\la}(\cdot),X_{\al,\la}(\cdot), Y_{\al,\la}(\cdot), Z_{\al,\la} (\cdot))$ on the fixed processes $\al(\cdot)$ and $\la(\cdot)$. We will continue to omit subscripts of such processes in equations when there is no confusion.
Substituting (\ref{stac1})  into (\ref{FBSDE1}),  $(X_{\al,\la}(\cdot), Y_{\al,\la}(\cdot), Z_{\al,\la} (\cdot))$ satisfies the following FBSDE:
 for $s\in [0, T]$, 
\begin{equation}\label{linear1}
	\left\{
	\begin{aligned}
		dX(s)=&\; \{AX- BR^{-1}[B^{\top}Y+D^{\top}Z]+A_1\alpha\}ds \\
		&+ \{CX- DR^{-1}[B^{\top}Y+D^{\top}Z]+C_1\alpha\}dW(s),\\
		dY(s)=&- [ A^{\top}Y+C^{\top}Z+QX + \lambda  ] ds + Z(s)dW(s),\\
		X(0)= &\; x, \; Y(T)=GX(T).
	\end{aligned}
	\right.
\end{equation}

\begin{lemma}\label{the3.4}
	Let (H1) and (H2)  hold. Then, for $x \in  \RR^n$ and $\al(\cdot)\in\LL^2_0,$ $\la(\cdot)\in\LL^2$  fixed, the coupled system (\ref{linear1})
	 admits a unique adapted solution
	  $\big(X_{\al,\la}(\cdot), Y_{\al,\la}(\cdot), Z_{\al,\la}(\cdot)\big) \in \big(L^{2,c}_\FF(\RR^n)\big)^2 \times L_{\FF}^{2}(\mathbb{R}^n )$. 
	\end{lemma}
	
By the uniqueness above, we can write the solution to (\ref{linear1}) as linear combinations of $x,\;\la(\cdot),\;\al(\cdot)$. Thus, there are three linear operators $\cP$, $\cL_1$ and $\cL_2$ taking values in $\LL^2$ from $\RR^n$, $\LL^2$ and $\LL^2_0$, respectively, such that
$\EE X_{\al,\la}(\cdot)=\cP x(\cdot)+\cL_1\la(\cdot)+\cL_2\al(\cdot)$. Thus,
	\begin{equation}  \label{xu5} 
		\alpha(t) = (\mathcal{P}x)(t)  + (\mathcal{L}_1\lambda)(t)+(\mathcal{L}_2 \alpha)(t). 
\end{equation}

{\color{blue}\begin{remark}\label{rem0523a}
We will decouple the system (\ref{linear1}) in Section \ref{sec4} and obtain the equation (\ref{xu3}) for $X_{\alpha, \lambda}$ alone, together with a BSDE 
for an auxiliary process pair which can be solved separately. After that, (\ref{xu3}) can be  solved by standard
SDE technique. Thus,  $X_{\alpha, \lambda}$ can be explicitly represented as a linear combination of  $x$, $\lambda$ and $\alpha$. Taking expectation then
gives the linear operators $\cP$, $\cL_1$ and $\cL_2$. 
\end{remark}}

	 \begin{remark}
	 	Under an additional condition, we can prove that $\cL_1$ is invertible. 
	 	In this case, we can uniqely solve $\lambda(\cdot)$ from (\ref{xu5}) in terms of $\al(\cdot)$, 
	 	and then  Problem  (MFSLQ) will be converted to an SLQ control problem with deterministic control variable $\al(\cdot)$.
	 	However, as we will see from the next lemma,  the uniqueness of $\la(\cdot)$ is not important. 
	 	\end{remark}
	 	
	\begin{lemma}\label{lem1115a}
For any $\la(\cdot)\in\LL^2$ such that $\EE X_{\al,\la}(\cdot)=\al(\cdot)$, we have $u_{\al,\la}(\cdot)=u^*_\al(\cdot)$.
\end{lemma}

After we obtain  the optimal $u^*_\al(\cdot)$ for Problem 1, we finally  consider the problem with state equation (\ref{OState}) (with $u(\cdot)$ there replaced by $u^*_\al(\cdot)$),   and objective functional
\begin{equation}\label{eq1116a}
	\begin{aligned}
		\hat{J}_\al(u^*_\al(\cdot)) =\mathbb{E} \Big\{ \int_{0}^{T}\Big(\langle QX,X\rangle+\langle Q_1\alpha,\alpha\rangle 
			+ \langle Ru^*_\al , u^*_\al \rangle   \Big)ds
			+	\langle GX(T),X(T)\rangle 	\Big\}.
\end{aligned}\			
	\end{equation}

{\bf Problem 2}: Find $\al^*(\cdot)\in  \LL^2_0$ such that
\[\inf_{u(\cdot) \in \mathcal{U}}J\big( u(\cdot)\big) =  \hat{J}_{\al^*}(u^*_{\al^*}(\cdot)) =   \inf_{\alpha(\cdot) \in \LL^2_0 } \hat{J}_\al(u^*_\al(\cdot)) .\]

{\color{blue}\begin{lemma}\label{lem0523a}
The unique optimal control   $u^*_\al(\cdot)$ for Problem 1  can be 
written as
\begin{equation}  \label{xiong} 
	u^*_\al(\cdot)  = (\mathcal{P}_2x)(\cdot)   +(\mathcal{K}_2 \alpha)(\cdot).
\end{equation}
Further,  the state process for Problem 2  can be represented as 
\begin{equation}  \label{xiong2} 
	X(\cdot)  = (\mathcal{P}_1x)(\cdot)   +(\mathcal{K}_1 \alpha)(\cdot),
\end{equation}
and 
\begin{equation}  \label{xiong3} 
	X(T)  = \mathcal{P}_3x  +\mathcal{K}_3 \alpha,
\end{equation}
where  $\mathcal{K}_i$ and 	$\mathcal{P}_i$ are some linear operators, $i= 1, 2, 3$, on suitable spaces.
\end{lemma}}

\begin{lemma}\label{lem1116b}
Let (H1) and (H2) hold. Then  $\al^{\ast}(\cdot)$ is optimal for Problem 2 if and only if 
	\begin{equation}\label{alsolution}
 (\mathcal{K}^*_1 Q  \mathcal{K}_1 + \EE Q_1 + \mathcal{K}^*_2R \mathcal{K}_2+ \mathcal{K}^*_3G \mathcal{K}_3) \al ^* + ( \mathcal{K}^*_1 Q  \mathcal{P}_1   +\mathcal{K}^*_2 R \mathcal{P}_2+\mathcal{K}^*_3 G \mathcal{P}_3  ) x=  0,
\end{equation}
where $\mathcal{K}^*_i$ are adjoint operators of $\mathcal{K}_i$ for $i = 1, 2 , 3$.
\end{lemma}

Finally, we summarize the results above to the main theorem of this paper.

\begin{theorem}\label{main0}
	Suppose that (H1) and (H2) hold. Then, the optimal control  $u^*(\cdot)$ of Problem (MFSLQ) is given by (\ref{stac1}) with $\big(X(\cdot),Y(\cdot),Z(\cdot)\big)$ satisfying (\ref{linear1}) (with $(\al(\cdot)$ there replaced by $\al^*(\cdot)$, and $\lambda(\cdot)$ there  replaced by any solution of  equation (\ref{xu5}) (with $(\al(\cdot)$ in equation (\ref{xu5}) replaced by $\al^*(\cdot) )$, 
	while the deterministic process $\al^*(\cdot)$ is any solution to (\ref{alsolution}).
\end{theorem}

\begin{remark}
	
The decoupling of the FBSDE (\ref{linear1}) will be discussed in Section \ref{sec4}. The homogeneous case (i.e. $\al=\la=0$) was established   by Tang \cite{tang2003}. For  equations with general non-homogeneous 
terms, this problem was studied by Kohlmann and Tang \cite{KT} using BMO-martingale
    type theory.  When the state process is one-dimensional with regime-switching,
    it was recently   investigated  by Hu $et\; al$ \cite{HSX} using BMO theory. In this article, we consider 
  this problem with a special class of nonhomogeneous terms without using BMO theory, which is very technical. 
  This special class is enough for our purpose in this paper.
  \end{remark}

\section{A preliminary stochastic maximum principle }\label{sec3}
\setcounter{equation}{0}
\renewcommand{\theequation}{\thesection.\arabic{equation}}

In this section, we start with the existence and uniqueness of the solution to the state equation (\ref{OState1}) and an adjoint BSDE (\ref{eBSDE}) using
contraction principle. Then, we prove Theorem \ref{SMP1} using convex variation principle and a general result for quadratic forms.

For the well-posedness of the state equation (\ref{OState1}), we have the following result.
\begin{theorem}\label{thm0515a}
Let (H1) hold. Then, for any initial state $x \in \RR^n$ and control $u(\cdot) \in \mathcal{U}$, the state equation  (\ref{OState1}) admits a unique adapted solution
$
X(\cdot)\in L^{2,c}_\FF(\RR^n),
$
and there exists a constant $K >0$, which is independent of $x$ and $u(\cdot)$, such that
\begin{equation}\label{statees}
\mathbb{E}\left[\sup_{0\leq s \leq T}|X(s)|^{2}\right] \leq K \left[x^{2 }+\mathbb{E} \int^{T}_{0}|u(s)|^{2}ds\right].
\end{equation}
\end{theorem}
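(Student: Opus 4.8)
The plan is to prove existence and uniqueness by a fixed-point argument on the Banach space $L_{\mathbb{F}}^{2}\big(\Omega; C(0,T;\mathbb{R}^n)\big)$ equipped with the norm $\|X\|=\big(\mathbb{E}\sup_{0\le s\le T}|X(s)|^2\big)^{1/2}$, and then to derive (\ref{statees}) by an It\^o--Gronwall computation. The key observation is that the mean-field terms $\mathbb{E}X(s)$ are \emph{deterministic}, continuous in $s$, and satisfy the Jensen bound $|\mathbb{E}X(s)|^2\le\mathbb{E}|X(s)|^2$; since $A(\cdot),\dots,D(\cdot)$ are bounded by H1, substituting a known process into the mean-field slots turns (\ref{OState1}) into a classical inhomogeneous linear SDE. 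Note that, unlike the deterministic-coefficient case, one \emph{cannot} first solve a closed ODE for $\mathbb{E}X(\cdot)$, because $\mathbb{E}[A(s)X(s)]\neq\mathbb{E}[A(s)]\mathbb{E}[X(s)]$ when $A$ is random; so the fixed point must be taken on the full process rather than on the pair (mean, fluctuation). This is the mild incarnation, at the level of the state equation, of the difficulty flagged in the introduction.

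Concretely, for $X\in L_{\mathbb{F}}^{2}\big(\Omega; C(0,T;\mathbb{R}^n)\big)$ set $\bar X(s):=\mathbb{E}X(s)$ and let $\Gamma(X)=Y$ be the unique adapted solution of the linear SDE
\begin{equation*}
	\begin{cases}
		dY(s)=[A(s)Y(s)+A_1(s)\bar X(s)+B(s)u(s)]ds+[C(s)Y(s)+C_1(s)\bar X(s)+D(s)u(s)]dW(s),\\
		Y(0)=\xi;
	\end{cases}
\end{equation*}
existence, uniqueness, and the bound $\|Y\|^2\le K\big(\mathbb{E}|\xi|^2+\mathbb{E}\int_0^T(|\bar X(s)|^2+|u(s)|^2)\,ds\big)<\infty$ are classical for linear SDEs with bounded coefficients and square-integrable free terms (see e.g.\ \cite{Yong1999}), so $\Gamma$ maps the space into itself and any fixed point of $\Gamma$ is an adapted solution of (\ref{OState1}). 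For $X^1,X^2$ with $Y^i=\Gamma(X^i)$, the difference $\delta Y=Y^1-Y^2$ solves a linear SDE with zero initial value and free terms $A_1\,\mathbb{E}\delta X$, $C_1\,\mathbb{E}\delta X$; the standard moment estimate together with $|\mathbb{E}\delta X(s)|^2\le\mathbb{E}|\delta X(s)|^2$ gives
\begin{equation*}
	\mathbb{E}\Big[\sup_{0\le s\le t}|\delta Y(s)|^2\Big]\le K\int_0^t\mathbb{E}\Big[\sup_{0\le r\le s}|\delta X(r)|^2\Big]\,ds,\qquad t\in[0,T].
\end{equation*}
Iterating this inequality along the Picard sequence $X^{(k+1)}=\Gamma(X^{(k)})$ yields $\|X^{(k+1)}-X^{(k)}\|^2\le \frac{(KT)^k}{k!}\,\|X^{(1)}-X^{(0)}\|^2$, which is summable, so $\{X^{(k)}\}$ is Cauchy and $\Gamma$ has a unique fixed point $X$; this proves the well-posedness part. (Equivalently, the same estimate makes $\Gamma$ a contraction on a short interval $[0,\delta]$ with $\delta$ independent of the initial time, and one concatenates finitely many such steps to cover $[0,T]$.)

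For the a priori estimate, I would apply It\^o's formula to $|X(s)|^2$, take the supremum over $s\in[0,t]$, then expectation, and bound the stochastic integral by the Burkholder--Davis--Gundy inequality; using the boundedness of the coefficients, Young's inequality to absorb the $\sup$-quadratic term coming from BDG into the left-hand side, and once more $\mathbb{E}|\mathbb{E}X(s)|^2\le\mathbb{E}|X(s)|^2$, one arrives at
\begin{equation*}
	\mathbb{E}\Big[\sup_{0\le s\le t}|X(s)|^2\Big]\le K\,\mathbb{E}\Big[|\xi|^2+\int_0^T|u(s)|^2\,ds\Big]+K\int_0^t\mathbb{E}\Big[\sup_{0\le r\le s}|X(r)|^2\Big]\,ds,
\end{equation*}
and Gronwall's lemma delivers (\ref{statees}) with a constant $K$ depending only on $T$, $n$, and the $L^\infty$ bounds of the coefficients, hence independent of $\xi$ and $u(\cdot)$. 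The finiteness of $\mathbb{E}[\sup_{[0,T]}|X|^2]$ required to invoke Gronwall is already guaranteed by the existence part; alternatively one inserts a stopping-time localization $\tau_N=\inf\{s:|X(s)|\ge N\}$, runs the argument up to $t\wedge\tau_N$, and lets $N\to\infty$ by monotone convergence. The proof is essentially routine; the only point needing attention is precisely the one above — the random coefficients forbid decoupling the mean equation — which is harmless here because $\Gamma$ remains well-posed and the Jensen bound keeps the mean-field feedback contractive.
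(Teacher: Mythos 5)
Your proof is correct and follows essentially the same route as the paper: a contraction/fixed-point argument on $L_{\mathbb{F}}^{2}\big(\Omega; C(0,T;\mathbb{R}^n)\big)$ exploiting the boundedness of the coefficients, the Burkholder--Davis--Gundy inequality, and the Jensen bound $|\mathbb{E}X(s)|^2\le\mathbb{E}|X(s)|^2$ for the mean-field terms, followed by a Gronwall-type estimate for (\ref{statees}). The only cosmetic difference is that you freeze just the mean-field slot and get global convergence from the $(KT)^k/k!$ iteration, whereas the paper applies the full Picard integral operator, contracts on a small time interval, and then extends to $[0,T]$ by continuation.
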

\begin{proof} 
 For any $u(\cdot) \in \mathcal{U}$ fixed, we define an operator $\Gamma$ on $L^{2,c}_\FF(\RR^n)$:
\begin{equation*}
\begin{aligned}
(\Gamma \widetilde{X})(t) := &\, \, x+ \int^{t}_{0} [A\widetilde{X}+A_1\mathbb{E}\widetilde{X}+Bu]ds+\int^{t}_{0}[C\widetilde{X}+C_1\mathbb{E}\widetilde{X}+Du]dW(s).
\end{aligned}
\end{equation*}
Applying Cauchy–Schwarz and Burkholder–Davis–Gundy (BDG for short) inequalities, we obtain
\begin{eqnarray}\label{estima2}
\mathbb{E}\Big[\sup_{0 \leq t \leq \tau} |( \Gamma \widetilde{X})(t)|^{2}\Big] &\leq & \, K \mathbb{E}\Big[ x^{2}+ \Big(\int^{\tau}_{0} |A\widetilde{X}|ds\Big)^{2}+  
\Big(\int^{\tau}_{0}|A_1\mathbb{E}\widetilde{X}|ds\Big)^{2}+\Big(\int^{\tau}_{0}|Bu|ds\Big)^{2} \nonumber\\
&&+\int^{\tau}_{0}|C\widetilde{X}|^{2}ds+\int^{\tau}_{0}|C_1\mathbb{E}\widetilde{X}|^{2}ds +\int^{\tau}_{0} |Du|
^{2}ds \Big]\nonumber\\
&\leq &
 K \mathbb{E}\Big[  x^{2}+K\tau\sup_{0 \leq t \leq \tau} |  \widetilde{X}(t)|^{2}+K\(1+\tau\)\int^{\tau}_{0}|u(s)|^{2}ds\Big].
\end{eqnarray}
Hence, $(\Gamma \widetilde{X})(t) \in  L^{2,c}_\FF(\RR^n)$. For $\widetilde{X}_{1}(\cdot) $, $\widetilde{X}_{2}(\cdot) \in  L^{2,c}_\FF(\RR^n)  $, we have 
\begin{eqnarray*}
&&\mathbb{E}\Big[\sup_{0 \leq t \leq \tau} |\Gamma   (\widetilde{X}_{1}-\widetilde{X}_{2})(t)|^{2}\Big] \\
&\leq &    K \mathbb{E} \Big[\Big(\int^{\tau}_{0}|A(s)(\widetilde{X}_{1}-\widetilde{X}_{2})(s)|ds\Big)^{2}+\Big(\int^{\tau}_{0}|A_{1}(s)\mathbb{E}(\widetilde{X}_{1}-\widetilde{X}_{2})(s)|ds\Big)^{2}\\
&&+\int^{\tau}_{0}|C(s)(\widetilde{X}_{1}-\widetilde{X}_{2})(s)|^{2}ds+\int^{\tau}_{0}|C_{1}(s)\mathbb{E}(\widetilde{X}_{1}-\widetilde{X}_{2})(s)|^{2}ds\Big] \\
&\leq&  K\tau \mathbb{E} \Big[\sup_{0 \leq t \leq \tau}|\widetilde{X}_{1}-\widetilde{X}_{2}|^{2}(t) \Big].
\end{eqnarray*}
Taking $\tau > 0$ small enough, by  contraction mapping theorem, there exists a unique strong solution on $[0, \tau ]$. Moreover,  (\ref{estima2}) implies that this unique solution satisfies estimation (\ref{statees}). Then we can apply the usual continuation argument to get the unique adapted solution on $[0, T]$, and  (\ref{statees}) follows from BDG and Gronwall inequalities easily.
\end{proof}

Next, we consider the following BSDE with random coefficients and mean-field terms: for $s \in [0, T]$,
\begin{equation}\label{eBSDE}
\left\{
\begin{aligned}
dY(s) &=  -\{A^{\top}Y+C^{\top}Z+\mathbb{E}[A_{1}^{\top}Y+C_{1}^{\top}Z]  +q\}ds+ Z(s)dW(s),\\
Y(T)&=\zeta\in L^{2}_{\cF_T}(\RR^n).
\end{aligned}
\right.
\end{equation}
This equation is of the same form as the second equation of (\ref{FBSDEoptimal}) with $QX^*+\EE Q_1\EE X^*$ there replaced by $q$ here, which is regarded as known after the first equation of (\ref{FBSDEoptimal}) is solved. Also, 
$\zeta$ here replaces $GX^*(T)$ there.

\begin{theorem}\label{thm2}
Let  (H1) and  (H2) hold. Then, BSDE (\ref{eBSDE}) has a unique solution 
$$\big(Y(\cdot), Z(\cdot)\big) \in L^{2,c}_\FF(\RR^n) \times L_{\FF}^{2}(\mathbb{R}^n ).  $$
Moreover, 
\begin{equation}\label{eq1006a}
\mathbb{E}\left [\sup_{0 \leq t \leq T}|Y(t)|^{2}\right]+ \mathbb{E} \int^{T}_{0} |Z(s)|^2ds \leq K\mathbb{E}\left[ |\zeta|^{2}+\int^{T}_{0}|q(s)|^{2}ds\right].
\end{equation}
\end{theorem}
\begin{proof}
Define a norm $||X(\cdot)||_{\sigma}$ on $  L_{\FF}^{2} (\mathbb{R}^{n})$ as
$$ ||X(\cdot)||_{\sigma} =\(\mathbb{E}\int^{T}_{0}e^{\sigma t}|X(t)|^{2}dt\)^{\frac{1}{2}},$$
where $\sigma>0$ will be determined later.   It is easy to verify that  for any $\big(y(\cdot), z(\cdot)\big) \in \big(L_{\FF}^{2} (\mathbb{R}^{n  }) \big)^2$, the following BSDE has a unique  adapted solution $\big(Y(\cdot), Z(\cdot)\big) \in \big(L_{\FF}^{2} (\mathbb{R}^{n}) \big)^2$: 
\begin{equation*}
\left\{
\begin{array}{ccl}
dY(s) &= & -\{A^{\top}Y+C^{\top}Z+\mathbb{E}[A^{\top}_{1}y+C^{\top}_{1}z]
+q\}ds
+ ZdW(s),\\
Y(T)&=&\zeta, \; s \in [0, T].
\end{array}
\right.
\end{equation*}
Define the operator $\Gamma $ as $\Gamma (y, z) := (Y, Z)$. For $(y_{i}, z_{i}) \in   \big(L_{\FF}^{2} (\mathbb{R}^{n})\big)^2$, $i=1,\ 2$, we denote $ (Y_{i}, Z_{i}) = \Gamma (y_{i}, z_{i})$. Set $$(\widetilde{Y}, \widetilde{Z}) = (Y_{1}-Y_{2}, Z_{1}-Z_{2})\mbox{ and }(\widetilde{y}, \widetilde{z}) = (y_{1}-y_{2}, z_{1}-z_{2}).$$ Applying It\^o's formula to $e^{\sigma t} |\widetilde{Y}(t)|^{2}$, we obtain
\begin{equation*}
\begin{aligned}
d \big(e^{\sigma t} |\widetilde{Y}(t)|^{2}\big) =& \sigma e^{\sigma t} |\widetilde{Y}|^{2}dt -2e^{\sigma t} \langle\widetilde{Y}, A^{\top} \widetilde{Y}+C^{\top}\widetilde{Z}+\mathbb{E}[A^{\top}_{1}\widetilde{y}+C^{\top}_{1}\widetilde{z}] \rangle dt \\
& + 2e^{\sigma t}\langle\widetilde{Y}, \widetilde{Z}\rangle dW(t) +e^{\sigma t} |\widetilde{Z}|^{2}dt.
\end{aligned}
\end{equation*}
Therefore, 
\begin{equation*}
\begin{aligned}
|\widetilde{Y}(0)|^{2} &+ \sigma \mathbb{E}\left[\int^{T}_{0} e^{\sigma s} |\widetilde{Y}|^{2}ds\right]+ \mathbb{E}\left[\int^{T}_{0} e^{\sigma s} |\widetilde{Z}|^{2}ds \right]\\
=& \; 2\mathbb{E}\left[\int^{T}_{0} e^{\sigma s} \langle \widetilde{Y}, A^{\top} \widetilde{Y}+C^{\top}\widetilde{Z}+\mathbb{E}[A^{\top}_{1}\widetilde{y}]+\mathbb{E}[C^{\top}_{1}\widetilde{z}] \rangle ds\right]\\
\leq& \; 2K \mathbb{E}\left\{\int^{T}_{0}e^{\sigma s}\(|\widetilde{Y}|^{2}+|\widetilde{Z}^{\top}\widetilde{Y}|+ | \mathbb{E}(\widetilde{y}^{\top}) \widetilde{Y}| + | \mathbb{E}(\widetilde{z}^{\top}) \widetilde{Y}|\)ds \right\} \\
\leq &\;    \mathbb{E}\bigg\{\int^{T}_{0} e^{\sigma s} \Big(2K |\widetilde{Y}|^{2}+ \frac{1}{2}|\widetilde{Z}|^{2} + 8K^{2} |\widetilde{Y}|^{2} +\frac{\sigma}{4}|\widetilde{Y}|^{2} + \frac{16K^{2}}{\sigma} \mathbb{E}|\widetilde{y}|^{2}\\
&\; +\frac{\sigma}{4}|\widetilde{Y}|^{2} +\frac{16K^{2}}{\sigma} \mathbb{E}|\widetilde{z}|^{2} \Big)ds\bigg\}  \\
=&\;   \mathbb{E}\left\{\int^{T}_{0}  e^{\sigma s} \left[\(2K+8K^{2} + \frac{\sigma}{2}\) |\widetilde{Y}|^{2}+ \frac{1}{2}|\widetilde{Z}|^{2} + \frac{16K^{2}}{\sigma}(  \mathbb{E}|\widetilde{z}|^{2}+  \mathbb{E}|\widetilde{y}|^{2})\right]ds \right \}.
\end{aligned}
\end{equation*}
We then have,
\begin{eqnarray*}
&&\(\frac{\sigma}{2}-2K-8K^{2}\)   \mathbb{E}\left\{\int^{T}_{0} e^{\sigma s}  |\widetilde{Y}|^{2}ds \right\}+ \frac{1}{2} \mathbb{E}\left\{\int^{T}_{0} e^{\sigma s}  |\widetilde{Z}|^{2}ds \right\}\\
&\leq& \frac{16K^{2}}{\sigma}   \mathbb{E}\left\{\int^{T}_{0} e^{\sigma s}  (|\widetilde{z}|^{2}+  |\widetilde{y}|^{2})ds\right  \}.
\end{eqnarray*}
Thus, 
$\|(\widetilde{Y},\widetilde{Z})\|^2_\si\le L\|(\widetilde{y},\widetilde{z})\|^2_\si$, 
where $L=\max\(\frac{32K^2}{\si(\si-4K-16K^2)},\frac{32K^2}{\si}\)$ which is clearly less than 1 if we take
$\sigma= 32K^{2}+4K+1$. In this case, $\Gamma$ is a contraction mapping.  Therefore, BSDE (\ref{eBSDE}) has a unique solution $(Y(\cdot), Z(\cdot)) \in \(L_{\FF}^{2} (\mathbb{R}^{n} ) \)^2$. Moreover, by the properties of stochastic integral in equation (\ref{eBSDE}), it is clear $Y(\cdot)$ is continuous. The second moment of the sup norm of $Y(\cdot)$ can be estimated by BDG inequality. Thus,
 $Y(\cdot) \in L^{2,c}_\FF(\RR^n). $ Then, the existence and uniqueness of solution of BSDE (\ref{eBSDE})  are obtained. 

The proof of (\ref{eq1006a}) follows from the same arguments as those for (2.11) in (\cite{Yong1999}, p.349).
\end{proof}

The next lemma will be useful in proving the optimaility for quadratic functionals. Let $\HH$ be a Hilbert space and $\cQ$  a self-adjoint operator
on $\HH$. We say that $\cQ>0$ (resp. $\cQ\ge 0$) if for any $h\in\HH\setminus \{0\}$, we have $\<\cQ h,h\>_\HH>0$ (resp. $\ge 0$). We assume 
$\cQ\ge 0$ and
\begin{equation}\label{eq1114a}
F(h)=\<\cQ h,h\>_\HH-2\<h,b \>_\HH+c,\end{equation}
where $b\in\HH$ and $c\in\RR$. We say that the quadratic form $F$ is positive definite (resp. semi-definite) if $\cQ>0$ (resp. $\cQ\ge 0$).

\begin{lemma}\label{lem1114a}
i) If $\cQ>0$, then there exists a unique $h_0\in\HH$ such that $F$ attains its infimum.

ii) If $\cQ\ge 0$, then $h_0$ attains the infimum of $F$ if and only if $D F(h_0)=0$.
\end{lemma}
\begin{proof}
i) Note that $\cQ$ is invertible. Then
\begin{eqnarray*}
F(h)&=&\left|\cQ^{1/2}h\right|^2_{\HH}-2\<\cQ^{1/2}h,\cQ^{-1/2}b\>_\HH+c\\
&=&\left|\cQ^{1/2}h-\cQ^{-1/2}b\right|^2_\HH+c-\left|\cQ^{-1/2}b\right|^2_\HH,
\end{eqnarray*}
attains its infimum at the only point $h=\cQ^{-1}b$.

ii) If $h_0$ attains the infimum of $F$, then for any $h\in\HH$, we have
\begin{eqnarray*}
0&\le&\lim_{\ep\to 0}\ep^{-1}\(F(h_0+\ep h)-F(h_0)\)\\
&=&\lim_{\ep\to 0}\ep^{-1}\(\<\cQ (h_0+\ep h),h_0+\ep h\>_\HH-2\<h_0+\ep h,b \>_\HH-\<\cQ h_0,h_0\>_\HH+2\<h_0,b \>_\HH\)\\
&=&2\<\cQ h_0-b,h\>.
\end{eqnarray*}
Thus, $D F(h_0)=2(\cQ h_0-b)=0$.

On the other hand, suppose $D F(h_0)=0$. Then,  $\cQ h_0=b$. Then, for any $h\in\HH$, we have
\begin{eqnarray*}
F(h)-F(h_0)&=&\<\cQ h,h\>_\HH-2\<h,\cQ h_0 \>_\HH+c-\<\cQ h_0,h_0\>_\HH+2\<h_0,\cQ h_0 \>_\HH-c\\
&=&\<\cQ(h-h_0),h-h_0\>_\HH\ge 0.
\end{eqnarray*}
Thus, $h_0$ attains the infimum of $F$.
\end{proof}

Finally, we proceed to presenting the

{\em	Proof of Theorem \ref{SMP1}}: By Theorem \ref{thm0515a}, $X(\cdot)$ can be written as a linear combination of
$x\in\RR^n$ and $u(\cdot)\in\cU$. Therefore, $J$ can be written as a quadratic functional
 of $u(\cdot)$ with a finite infimum. Thus, $J$ has the 
form of (\ref{eq1114a}) with $\HH=\cU$ and 
$\<\cQ u,u\>_\cU\ge\de|u|^2_{\cU}>0$. By Lemma \ref{lem1114a} i), $J$ attains its infimum at a unique control $u^*(\cdot)$,
 and hence,
Problem (MFSLQ) has a unique optimal control $ u^*(\cdot)$. Further, $u^*(\cdot)$ is determined by $D J(u^*(\cdot))=0$.

Let $X^\ast(\cdot)$ be the optimal process that satisfies equation (\ref{OState1}). Let $X^{\epsilon}(\cdot)$ denote the state trajectory with respect to the control $u^{\epsilon}(\cdot)=u^\ast(\cdot)+\epsilon v (\cdot)$, where  $\epsilon \in\RR$,  $v (\cdot) \in \mathcal{U} $. 

We introduce the variation equation:
\begin{equation*}
	\left\{
\begin{aligned}
		dX_{1}(s)&=[AX_{1}+A_1\mathbb{E}X_{1}+Bv]ds+[CX_{1}+C_1\mathbb{E}X_{1}+Dv]dW(s), \; s \in [0, T], \\
		X_{1}(0) &=0.
\end{aligned}
\right.
\end{equation*}
By linearity, it is easy to verify that
$$
	X^{\epsilon}(s)-X^\ast(s)= \epsilon X_1(s), \qquad  \forall s \in [0, T],\ a.s..
	$$
Then, there exists  a constant $K>0$ such that for $\epsilon \in [0, 1]$
	$$
	\sup_{0 \leq s \leq T} \mathbb{E} |X^{\epsilon}(s) -X^\ast(s)| ^2 \leq K \epsilon^2.
	$$

Note that
\begin{eqnarray}\label{eq0921a}
&&J\(u^\ep (\cdot)\)-J\(u^*(\cdot)\)\\
&=&2\ep \mathbb{E}\left\{\int_0^T\Big( \langle QX^\ast,X_{1}\rangle+\langle Q_1\mathbb{E}X^\ast,\mathbb{E}X_{1}\rangle+ \langle Ru^\ast,v \rangle \Big)ds 
		+\langle GX^\ast(T), X_{1}(T) \rangle\right\}+\ep^2I\nonumber\\
&=&2\ep \mathbb{E}\left\{\int_0^T  \Big(\langle QX^\ast+
		\mathbb{E}Q_1\mathbb{E}X^\ast, X_{1}\rangle+\langle Ru^\ast, v \rangle \Big) ds + \langle GX^\ast(T), X_1(T)\rangle \right\}+\ep^2I,\nonumber
\end{eqnarray}
where 
\[I=\EE\(\int^T_0\(\<QX_1,X_1\>+\<Q_1\EE X_1,\EE X_1\>+\<Rv,v\>\)ds+\<GX_1(T),X_1(T)\>\)\ge 0.\]
Applying It\^o's formula to $\langle Y^\ast(\cdot), X_{1} (\cdot)\rangle $ and taking expectation, we obtain
	\begin{equation}\label{25q}
		\begin{aligned}
			\mathbb{E}\langle GX^\ast(T), X_{1}(T)\rangle
			=\mathbb{E}\left\{\int_0^T \Big( \langle -QX^\ast-\mathbb{E}Q_1\mathbb{E}X^\ast,  X_1\rangle+   \langle B^\top Y^\ast +D^\top Z^\ast,  v \rangle \Big) ds
			\right\}.\\
		\end{aligned}
	\end{equation}
Combining (\ref{eq0921a}) and (\ref{25q}), we get
\begin{equation}\label{eq0921b}
J\(u^\ep (\cdot)\)-J\(u^*(\cdot)\) =2\ep\mathbb{E} \int^T_0 \langle Ru^\ast+B^{\top}Y^\ast+D^{\top}Z^\ast, v \rangle ds+\ep^2I.\end{equation}
Therefore,
\[0= D J(u^*(\cdot))= 2(Ru^\ast+B^{\top}Y^\ast+D^{\top}Z^\ast).\]
	This finishes the proof of (\ref{eq0921c}).
	\qed
	
	\begin{remark}
		Note that it is difficult  to decouple  the optimality system    (\ref{FBSDEoptimal}, \ref{eq0921c}) due to the term $\mathbb{E}[A^\top_1Y^\ast +C^\top_1Z^\ast]$  in the adjoint equation. However, by  virtual of the ELM method in the next section, we will find a procedure to obtain the optimal control.
				\end{remark}

 \section{Extended LaGrange multiplier method}\label{sec4}
 \setcounter{equation}{0}
\renewcommand{\theequation}{\thesection.\arabic{equation}}

In this section, we consider Problem 1. First, we present the proofs of Lemmas \ref{lem1114b}-\ref{the3.4} and \ref{lem1115a}. Then, we will decouple the 
FBSDE (\ref{linear1}), and give a  representation of the optimal control $u^*_{\alpha}(\cdot)$ for Problem 1. 

{\em Proof of Lemma \ref{lem1114b}}: Similar to Theorem \ref{thm0515a}, for $x\in\RR^n$, $\al(\cdot)\in\LL^2_0$ and $u(\cdot)\in\cU$ fixed, SDE (\ref{OState})
 has a unique solution. Hence, $X(\cdot)$ can be written as a linear combination of $x$, $\al(\cdot)$ and $u(\cdot)$.
  Therefore, $\hat{J}_\al$ can be written as a quadratic form of $u(\cdot)$ with $\HH=\cU$ when $x$ and $\al(\cdot)$ are fixed. Further, $\EE X^u(\cdot)=\al(\cdot)$ can be written as $\cR u(\cdot)=h_1$, where $\cR$ is a linear operator from $\HH$ to $\LL^2$. Note that $h_1$ is in the range of the operator $\cR$.
   Thus, $h_1=\cR h_2$ for $h_2\in\HH$ and $\cR(u(\cdot)-h_2)=0$. Therefore, $u(\cdot)=h_2+h$ for $h\in\HH_1$ 
   which is the null space of $\cR$. So, $G(h)\equiv \hat{J}_\al(h_2+h)$ is a quadratic form on $\HH_1$ with the same quadratic operator $\cQ$ (and 
   modified $b$ and $c$). This implies the existence of a unique optimal control $u^*_\al(\cdot)$ for the constrained problem.
   \qed

{\em Proof of Lemma \ref{lemma3.3}}: Similar to the proof of Lemma \ref{lem1114b} above, we get the existence and uniqueness of such $u_{\al,\la}(\cdot)$ since $J_{\al,\la}(u(\cdot))$ is a positive definite quadratic form. 
Further, $u_{\al,\la}(\cdot)$ attains the infimum of $J_{\al,\la}$.

Let $X_{\al,\la}(\cdot)$ be  the solution of $(\ref{OState})$ corresponding  to the control $u_{\al,\la}(\cdot)$.  We will drop the subscript  $(\al, \la)$ in the calculations  when there is no confusion.
 Let $X_{\epsilon}(\cdot)$ denote the state trajectory with respect to the control $u_{\epsilon}(\cdot)=u(\cdot)+\epsilon v(\cdot)  $, where $\epsilon \in (0, 1]$ and  $v(\cdot) \in \mathcal{U} $. 
	
	Next, we introduce the variation equation:
	\begin{equation}\label{VAequation}
		\left\{
		\begin{aligned}
			dX_{1}(s)=&\; [A(s)X_{1}(s)+B(s)v(s)]ds+[C(s)X_{1}(s)+D(s)v(s)]dW(s), \; s \in [0, T], \\
			X_{1}(0) =&\; 0.
		\end{aligned}
		\right.
	\end{equation}
Note that
	\begin{align*}
		 &\; \frac{	J_{\alpha,\la}\(u_{\epsilon}(\cdot)\)-	J_{\alpha,\la}\(u(\cdot)\)}{\epsilon} \\
		=&\; \mathbb{E}\Big\{\frac{1}{\epsilon}\Big[\int_0^T\Big(\langle QX_{\epsilon},X_{\epsilon}\rangle-\langle QX,X\rangle 
		+\epsilon^2\langle Rv,v \rangle +2\epsilon \langle Ru, v \rangle  + \langle 2 \lambda, X_{\epsilon}- X\rangle \Big)ds\\
		&\quad +\langle GX_{\epsilon}(T),X_{\epsilon}(T)\rangle -\langle GX(T),X(T)\rangle \Big]\Big\}.
	\end{align*}
	Taking $\epsilon \to 0$, we have 
	\begin{equation}\label{28}
	\begin{aligned}
			0 = &\;  \lim_{\epsilon \to 0} \frac{	J_{\alpha,\la}\(u_{\epsilon}(\cdot)\)-	J_{\alpha,\la}\(u(\cdot)\)}{\epsilon} \\
		= &\; 2\mathbb{E}\Big\{\int_0^T\Big( \langle QX,X_1\rangle 
		+ \langle Ru, v \rangle  +  \langle  \lambda , X_1 \rangle \Big) ds+
		\langle GX(T),X_1(T)\rangle  \Big\}.
	\end{aligned}
	\end{equation}
	Applying It\^o's formula to $\langle Y(\cdot), X_{1} (\cdot)\rangle $, we obtain
	\begin{align*}
		d\langle Y(s), X_{1}(s)\rangle &=\; \langle  -( A^{\top}Y+C^{\top}Z+QX + \lambda), X_{1} \rangle ds +\langle Z,  X_{1}\rangle dW(s)\\
		&\quad +\langle Y, AX_{1}+Bv \rangle ds 
		+\langle Y, CX_{1}+Dv \rangle dW(s)
		+\langle Z, CX_{1}+Dv \rangle ds. 
	\end{align*}
	Therefore
	\begin{equation}\label{29}
		\begin{aligned}
			\mathbb{E}\langle GX(T), X_{1}(T)\rangle 
			=&\; \mathbb{E}\Big\{\int_0^T \Big( \langle -(QX+\lambda), X_{1}\rangle+\langle B^{\top}Y+D^{\top}Z, v\rangle \Big) ds\Big\}.
		\end{aligned}
	\end{equation}
	Substituting  (\ref{29})  into (\ref{28}), we obtain that  
	\begin{align}\label{eq1004a}
	0 =	&\; \lim_{\epsilon \to 0} \frac{	J_{\alpha,\la}\(u_{\epsilon}(\cdot)\)-	J_{\alpha,\la}\(u(\cdot)\)}{\epsilon} = \; 2 \mathbb{E}\int_0^T 
		 \langle Ru + B^{\top}Y+D^{\top}Z, v \rangle  ds.
	\end{align}
	Namely, (\ref{stac1}) holds.
	\qed

{\em Proof of Lemma \ref{the3.4}}: 
 Lemma \ref{lemma3.3} implies the existence and uniqueness 
of the solution to (\ref{linear1}). The $L^{2,c}_\FF$ and $L^2_\FF$-estimates follow from the same arguments as those in Theorems \ref{thm0515a}
and \ref{thm2}.
\qed

{\em Proof of Lemma \ref{lem1115a}}: As $u_{\al,\la}(\cdot)$ attains the infimum of $J_{\al,\la}(\cdot)$, we have
\[\hat{J}_\al(u_{\al,\la}(\cdot))+2\<\la,\EE X_{\al,\la}-\al\>_{\LL^2}
=J_{\al,\la}(u_{\al,\la}(\cdot))
\le J_{\al,\la}(u^*_\al)=\hat{J}_\al(u^*_\al(\cdot)).\]
Thus, for $\la$ such that $\EE X_{\al,\la}(\cdot)=\al(\cdot)$, we have $\hat{J}_\al(u_{\al,\la}(\cdot))\le \hat{J}_\al(u^*_\al(\cdot))$ which
becomes an equality due to the optimality of $u^*_\al(\cdot)$. The uniqueness from Lemma \ref{lem1114b} implies that
$u_{\al,\la}(\cdot)=u^*_\al(\cdot)$. 
\qed

{\color{blue}{\em Proof of Lemma \ref{lem0523a}}: Denote
\[\cD\equiv\left\{\cL^{-1}_1\be:\;\be\in\LL^2\right\}\subset 2^{\LL^2},\]
where $2^{\LL^2}$ is the set consisting of the subsets of $\LL^2$. Then, $\cD$ is a linear space by defining
\[c_1\cL^{-1}_1\be_1+c_2\cL^{-1}_1\be_2\equiv\{\la\in\LL^2:\; \la=c_1\la_1+c_2\la_2,\;\la_i\in\cL^{-1}_1\be_i,\ i=1,\ 2\},\]
with the convention that it is empty if $\cL^{-1}_1\be_1$ and/or $\cL^{-1}_1\be_2$ are empty.
By the linearity of $\cL_1$, it is easy to show that $c_1\cL^{-1}_1\be_1+c_2\cL^{-1}_1\be_2=\cL^{-1}_1(c_1\be_1+c_2\be_2)$. 

Similar to Remark \ref{rem0523a}, it follows from (\ref{stac1}) that $u_{\al,\la}(\cdot)$ can be written as a linear combination of $x,\ \al,\ \la$, namely
\[u_{\al,\la}(\cdot)=\ga_0 x(\cdot)+\ga_1\la (\cdot)+\ga_2\al(\cdot).\]
By Lemma \ref{lem1115a}, it is easy to see that $\ga_1$ is constant on each set $\cL^{-1}_1\be$. Namely, $\ga_1$ can be regarded as a linear operator on 
$\cD$. Thus,
\begin{eqnarray*}
u^*_\al(\cdot)&=&\ga_0 x(\cdot)+\ga_1\cL^{-1}_1(\al-\cP x-\cL_2\al) (\cdot)+\ga_2\al(\cdot)\\
&=&\(\ga_0-\ga_1\cL^{-1}_1\cP\)x(\cdot)+\(\ga_2+\ga_1\cL^{-1}_1(I-\cL_2)\)\al(\cdot)\\
&\equiv&(\mathcal{P}_2x)(\cdot)   +(\mathcal{K}_2 \alpha)(\cdot).
\end{eqnarray*}
This proves (\ref{xiong}). The other two equalities can be proved similarly.
\qed}

Since    FBSDE (\ref{linear1}) is fully coupled, we now use the invariant embedding skill to decouple it.  Trying an ansatz:
$$
Y(s) = \Sigma(s)X(s)+\phi(s), \; s \in [0,T],
$$
where 
$$
d\Sigma(s) = \Sigma_{1}(s)ds +\Psi(s)dW(s), \;   \Sigma(T) = G,
$$
and
$$
d\phi(s) = \phi_{1}(s)ds+\psi (s)dW(s),\; \phi(T)= 0, 
$$
with $\Sigma_1(\cdot)$ and $\phi_1(\cdot)$ being determined later.

Applying It\^o's formula to $Y(\cdot)$, we obtain
 \begin{equation}\label{combineY1}
	\begin{aligned}
		d Y(s) =& \;\Psi \Big(CX- DR^{-1}\(B^{\top}Y+D^{\top}Z\)+C_1\alpha \Big)  ds \\
		&\;+\Sigma \Big(AX- BR^{-1}\(B^{\top}Y+D^{\top}Z\)+A_1\alpha \Big) ds \\
		&\;+  \Sigma \Big(CX- DR^{-1}\(B^{\top}Y+D^{\top}Z\)+C_1\alpha \Big) dW(s) \\
		&\;+ \Sigma_{1} X ds+   \Psi  X dW(s) + \phi_{1}ds+\psi dW(s).
	\end{aligned}
\end{equation}
Combining (\ref{combineY1}) with the second equation of the system $(\ref{linear1})$,  we obtain
 \begin{equation}\label{c1}
	Z = \Sigma  \Big( CX- DR^{-1}\(B^{\top}Y+D^{\top}Z\)+C_1\alpha\Big)+\Psi X+ \psi 
\end{equation}
and
\begin{equation*}\label{c2}
	\begin{aligned}
		0=& \(A^{\top}Y+ C^{\top}Z+QX+\lambda \)+  \Psi  \Big( CX- DR^{-1}\(B^{\top}Y+D^{\top}Z\)+C_1 \alpha\Big)\\
		& +\Sigma \Big(AX- BR^{-1}\(B^{\top}Y+D^{\top}Z\)+A_1\alpha\Big)+ \Sigma_{1} X+\phi_{1}.
	\end{aligned}
\end{equation*}

Now we calculate $u(\cdot)$. Since
$$R u+B^{\top} Y+D^{\top}  Z = 0$$
and 
$$
Z= ( \Sigma C+ \Psi) X+ \Sigma  D u+ \Sigma  C_1 \alpha +\psi,
$$
we obtain
$$
R u+B^{\top}(\Sigma X+\phi)+D^{\top}(\Sigma C+\Psi) X+D^{\top} \Sigma D u+D^{\top} \Sigma C_1 \alpha+D^{\top} \psi=0.
$$
Namely
$$
\begin{aligned}
	\left(R+D^{\top} \Sigma D\right) u & =-\left\{\left(B^{\top} \Sigma+D^{\top} \Sigma C+D^{\top}\Psi) X+B^{\top} \phi+D^{\top} \Sigma C_1 \alpha+D^{\top} \psi\right\}\right. .\\
\end{aligned}
$$
As $R+D^{\top} \Sigma D$ is invertible,  we have
\begin{equation}\label{c3}
	\begin{aligned}
		u =-\left(R+D^{\top} \Sigma D)^{-1}\left\{\left(B^{\top} \Sigma+D^{\top} \Sigma C+D^{\top} \Psi\right) X
		+B^{\top} \phi+D^{\top} \psi+D^{\top} \Sigma C_1 \alpha\right\}\right..
	\end{aligned}
\end{equation}
Now we arrive at
$$
\begin{aligned}
	0  =&A^{\top}(\Sigma X+\phi)+C^{\top}(\Sigma C+\Psi) X+C^{\top} \Sigma D u+C^{\top} \Sigma C_1 \alpha+C^{\top} \psi  
	 +Q X+ \lambda\\
	&+\Psi\left(C X+D u+C_1 \alpha\right) 
	+\Sigma\left(A X+B u+A_1 \alpha\right)+\Sigma_1 X+\phi_1.
\end{aligned}
$$
Namely
$$
\begin{aligned}
	0 = & \left(A^{\top} \Sigma+C^{\top} \Sigma C+C^{\top} \Psi +Q+\Psi C+\Sigma A+\Sigma_1\right) X \\
	&+  \left(C^{\top} \Sigma D+\Psi D+\Sigma B\right) u+A^{\top} \phi+C^{\top} \Sigma C_1 \alpha+C^{\top} \psi \\
	& + \lambda+\Psi C_1 \alpha+\Sigma A_1 \alpha+\phi_1.
\end{aligned}
$$
Therefore,
$$
\begin{aligned}
	0	=& \Big(\Sigma_1 +A^{\top} \Sigma+\Sigma A+C^{\top} \Psi+\Psi C+C^{\top} \Sigma C  +Q \\
	&-\left(C^{\top} \Sigma D+\Psi D+\Sigma B\right)\left(R+D^{\top} \Sigma D\right)^{-1}\left(B^{\top} \Sigma+D^{\top} \Sigma C+D^\top \Psi\right) \Big) X \\
	& +\phi_1+A^{\top} \phi+C^{\top} \Sigma C_1 \alpha+C^{\top} \psi
	+ \lambda+\Psi C_1 \alpha+\Sigma A_1 \alpha \\
	&-(C^{\top} \Sigma D+\Psi  D+\Sigma B)\left(R+D^{\top} \Sigma D\right)^{-1}  (B^{\top} \phi+D^{\top} \Sigma C_1 \alpha
	+D^{\top} \psi).
\end{aligned}
$$
Now, we  define  $\Sigma_{1}(\cdot)$ and  $\phi_1(\cdot)$  as follows:
\begin{equation*}
	\begin{aligned}
		\Sigma_{1}=	&-\{\Sigma A+A^{\top}\Sigma+ \Psi C+ C^{\top}\Psi+C^{\top}\Sigma C +Q\\
		& -(\Sigma B+\Psi D+ C^{\top} \Sigma D)(D^{\top}\Sigma D+R)^{-1}(B^{\top}\Sigma + D^{\top}\Psi + D^{\top}\Sigma C)\}
	\end{aligned}
\end{equation*}
and 
\begin{equation*}
	\begin{aligned}
		\phi_1=& -\Bigg\{  \lambda + \Big(A^{\top}- (\Sigma B+\Psi D+ C^{\top} \Sigma D)(D^{\top}\Sigma D+R)^{-1}B^{\top}\Big) \phi \\
		&\; + \Big(C^{\top}- (\Sigma B+\Psi D+ C^{\top} \Sigma D)(D^{\top}\Sigma D+R)^{-1}D^{\top}\Big) \psi  \\
		&\;+ \Big( C^{\top}\Sigma C_1+  \Psi C_1 + \Sigma A_1 -(\Sigma B+\Psi D+ C^{\top} \Sigma D)(D^{\top}\Sigma D+R)^{-1}D^{\top} \Sigma C_1\Big) \alpha\Bigg\}.
	\end{aligned}
\end{equation*}
Namely, $\Sigma(\cdot)$ satisfies the stochastic Riccati equation: for $s\in [0,T]$,
\begin{equation}\label{R11}
	\left\{
	\begin{aligned}
		d\Sigma(s)=&-\{\Sigma A+A^{\top}\Sigma+ \Psi C+ C^{\top}\Psi+C^{\top}\Sigma C +Q\\
		& -(\Sigma B+\Psi D+ C^{\top} \Sigma D)(D^{\top}\Sigma D+R)^{-1}(B^{\top}\Sigma + D^{\top}\Psi + D^{\top}\Sigma C)\}ds+\Psi dW(s),\\
		\Sigma(T)=& G,
	\end{aligned}
	\right.
\end{equation}
and  $\phi(\cdot)$ satisfies the linear BSDE with unbounded coefficients:  for $s\in [0,T]$,
	\begin{equation} \label{base211}
		\left\{
		\begin{aligned}
			d \phi(s)&= -[\widehat{M} (s)\phi(s) + \widehat{N} (s)\psi (s)+ \lambda(s) + \widehat{Q}(s) \alpha(s)]ds + \psi (s) dW(s), \quad \\
			\phi(T) &= 0,
		\end{aligned}
		\right.
	\end{equation}
	where
		 $$
	 \widehat{M} = A^{\top}- (\Sigma B+\Psi D+ C^{\top} \Sigma D)(D^{\top}\Sigma D+R)^{-1}B^{\top}, 
	 $$	
	 $$
	 \widehat{N} = C^{\top}- (\Sigma B+\Psi D+ C^{\top} \Sigma D)(D^{\top}\Sigma D+R)^{-1}D^{\top},
	 $$
	 and
	  \[\widehat{Q}=C^{\top}\Sigma C_1+  \Psi C_1 + \Sigma A_1 -(\Sigma B+\Psi D+ C^{\top} \Sigma D)(D^{\top}\Sigma D+R)^{-1}D^{\top} \Sigma C_1.\]
Further, $X(\cdot)$ is governed by the following SDE: for $s\in [0,T]$,
\begin{equation}\label{xu3}
	\left\{
	\begin{aligned}
		dX(s) =&\; [\widehat{A}(s)X(s) +\widehat{B}(s)\phi(s)+\widehat{B}_{1}(s)\psi(s)+\widehat{A}_1(s)\al(s)]ds\\
		&\;+ [\widehat{C}(s)X(s)  + \widehat{D}(s)\phi(s)+\widehat{D}_{1}(s)\psi(s)+\widehat{C}_1(s)\al(s)]dW(s),\\
				X(0) =&\; x, 
	\end{aligned}
	\right.
\end{equation} 	
where
 \begin{equation*}
 	\begin{aligned}
 \widehat{A} = & \;\; A- B (D^{\top} \Sigma D+R )^{-1}(B^{\top}\Sigma+D^{\top} \Psi+D^{\top}\Sigma C ), \\
\widehat{A}_1= &\;\;   A_1 - B (D^{\top} \Sigma D+R )^{-1} D^{\top} \Sigma C_1, \\
\widehat{B} =& \;\;-B(D^{\top} \Sigma D+R )^{-1} B^\top,
\quad
\widehat{B}_1 =-B(D^{\top} \Sigma D+R )^{-1} D^\top,
\end{aligned}
\end{equation*}
 \begin{equation*}
	\begin{aligned}
\widehat{C} = & \;\; C- D (D^{\top} \Sigma D+R )^{-1}(B^{\top}\Sigma+D^{\top} \Psi+D^{\top}\Sigma C ), \\
\widehat{C}_1 =& \;\; C_1- D (D^{\top} \Sigma D+R )^{-1}D^{\top} \Sigma C_1, \\
\widehat{D} =& \;\; -D(D^{\top} \Sigma D+R )^{-1} B^\top,
\quad
\widehat{D}_1 =-D(D^{\top} \Sigma D+R )^{-1} D^\top.
\end{aligned}
\end{equation*}

\begin{lemma}\label{lem0510a}
	Let (H1) and (H2) hold.  Then the stochastic Riccati equation  (\ref{R11}) admits a unique adapted solution $\big(\Sigma(\cdot), \Psi(\cdot)\big) \in L^{\infty,c}_{\FF}(\mathbb{S}^{n}) \times L^{2}_\FF(\mathbb{S}^{n}) $
	such that for some $c >0$ 
	$$D^{\top}\Sigma D+R \geq c I_n, \; \quad a.e \; on \; [0, T], \; a.s. $$
	Moreover,    the linear BSDE (\ref{base211}) admits a unique  solution 
	$\(\phi (\cdot), \psi(\cdot)\) \in L^{2,c}_{\FF}(\mathbb{R}^n) \times L^{p,2}_{\FF}( \mathbb{R}^{n}) $, $\forall p\in (0,1)$.
\end{lemma}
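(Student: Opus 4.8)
The plan is to treat the two assertions separately: the solvability of the stochastic Riccati equation (\ref{R11}) is essentially a citation, whereas the solvability of the linear BSDE (\ref{base211}) is obtained by reading off its solution from the fully coupled FBSDE (\ref{linear1}), whose $L^2$-well-posedness is already in hand. For (\ref{R11}): under H1 and H2 the coefficients $A,B,C,D,Q$ are bounded, $G\in L^\infty_{\cF_T}(\Omega;\mathbb{S}^n_+)$, $Q\in L^\infty_\FF(0,T;\mathbb{S}^n_+)$ and $R\ge\delta I_m$, which is exactly the standard positive semidefinite setting of Tang \cite{tang2003} and Sun \emph{et al.} \cite{sun2021}. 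Those works provide a unique adapted solution $\big(\Sigma(\cdot),\Psi(\cdot)\big)\in L^\infty_\FF\big(\Omega;C(0,T;\mathbb{S}^n)\big)\times L^2_\FF(0,T;\mathbb{S}^n)$ with $\Sigma\ge 0$ and, crucially, with $\int_0^\cdot\Psi(s)\,dW(s)$ a $\mathrm{BMO}$ martingale; the nonnegativity of $\Sigma$ can alternatively be obtained from the nonnegativity of the value function of the mean-field-free SLQ problem whose Riccati equation is (\ref{R11}). Since $D^\top\Sigma D\ge 0$ and $R\ge\delta I_m$, we get $D^\top\Sigma D+R\ge\delta I_m$, i.e. the stated bound holds with $c=\delta$, and combined with H1 this also shows $(D^\top\Sigma D+R)^{-1}$ is bounded; hence $\hat A,\hat C,\hat M,\hat N,\hat Q$ are well defined, each being the sum of a bounded process and a process of the form $\Gamma\Psi$ with $\Gamma$ bounded, and one verifies directly that $\hat M=\hat A^\top$ and $\hat N=\hat C^\top$.

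For (\ref{base211}), fix a deterministic $(\alpha^\ast,\lambda^\ast)\in\big(L^2(0,T)\big)^2$ and consider the FBSDE obtained from (\ref{linear1}) by replacing $(\alpha,\lambda_\alpha)$ with $(\alpha^\ast,\lambda^\ast)$. Its coefficients $A,B,C,D,R^{-1},Q,A_1,C_1$ are all bounded under H1 and H2, so the argument of Theorem \ref{the3.4} applies verbatim — for any fixed deterministic $\lambda^\ast\in L^2(0,T)$ the cost $J_\alpha(\cdot,\lambda^\ast)$ stays strictly convex and coercive in $u$, as the $\lambda^\ast$-term is affine in $u$ — and yields a unique adapted solution $(X^\ast,Y^\ast,Z^\ast)\in\big(L^2_\FF(\Omega;C(0,T;\RR^n))\big)^2\times L^2_\FF(0,T;\RR^n)$. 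The computation already carried out in (\ref{combineY1})--(\ref{sigma1}) then shows, at a formal level, that
\[
\phi:=Y^\ast-\Sigma X^\ast,\qquad
\psi:=Z^\ast-\Psi X^\ast-\Sigma\big(CX^\ast-DR^{-1}(B^\top Y^\ast+D^\top Z^\ast)+C_1\alpha^\ast\big)
\]
solve (\ref{base211}) with $\lambda^\ast,\alpha^\ast$ playing the roles of $\lambda_\alpha,\alpha$. Moreover $\phi\in L^2_\FF(\Omega;C(0,T;\RR^n))$ since $\Sigma$ is bounded and $X^\ast,Y^\ast\in L^2_\FF(\Omega;C(0,T;\RR^n))$, and in the expression for $\psi$ every term other than $\Psi X^\ast$ is a bounded coefficient multiplied by a process in $L^2_\FF(0,T;\RR^n)$.

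The only genuinely delicate point — and the step I expect to be the main obstacle — is the square integrability of $\Psi X^\ast$, since $\Psi$ lies only in $L^2_\FF(0,T;\mathbb{S}^n)$ and need not be bounded. Here I would exploit the $\mathrm{BMO}$ property of $\int_0^\cdot\Psi\,dW$: an integration by parts in time together with the energy inequality $\EE\big[\int_\tau^T|\Psi|^2\,ds\,\big|\,\cF_\tau\big]\le\|\Psi\|_{\mathrm{BMO}}^2$ (for all stopping times $\tau$) gives
\[
\EE\int_0^T|\Psi_s|^2|X^\ast_s|^2\,ds\;\le\;K\,\|\Psi\|_{\mathrm{BMO}}^2\,\EE\Big[\sup_{0\le s\le T}|X^\ast_s|^2\Big]\;<\;\infty,
\]
so that $\Psi X^\ast\in L^2_\FF(0,T;\RR^n)$ and hence $\psi\in L^2_\FF(0,T;\RR^n)$. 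With all the relevant integrands now known to be square integrable, Itô's formula applied to $Y^\ast-\Sigma X^\ast$ — i.e. the computation of (\ref{combineY1})--(\ref{sigma1}) made rigorous — confirms that $(\phi,\psi)$ solves (\ref{base211}), which completes the proof. Uniqueness of $(\phi,\psi)$, which is not part of the statement, would follow from a standard a priori estimate together with the $\mathrm{BMO}$ bound on $\hat N=\hat C^\top$.
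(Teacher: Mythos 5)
Your proposal is correct and follows essentially the same route as the paper: cite Tang \cite{tang2003} and Sun \emph{et al.} \cite{sun2021} for the Riccati equation (with $\Sigma\ge 0$ and $R\ge\delta I_m$ giving $D^\top\Sigma D+R\ge\delta I_m$), then read off $\phi=Y_\alpha-\Sigma X_\alpha$ and the corresponding $\psi$ from the unique solution of the coupled system (\ref{linear1}) and verify (\ref{base211}) by It\^o's formula. The one place you go beyond the paper is welcome rather than divergent: the paper merely asserts that $(\phi,\psi)$ lies in the stated spaces, while you supply the missing justification that $\Psi X_\alpha\in L^2_\FF(0,T;\RR^n)$ via the BMO energy inequality, which is exactly the point that needs an argument since the lemma assumes only H1 and H2 and not the boundedness of $\Psi$.
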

\begin{proof}
The existence and unqueness of the solution to  the stochastic Riccati equation  (\ref{R11}) 
follows directly from Theorem 5.3 in Tang \cite{tang2003}. The conclusion 
$\big(\Sigma(\cdot), \Psi(\cdot)\big) \in L^{\infty,c}_{\FF}(\mathbb{S}^{n}) \times L^{2}_\FF(\mathbb{S}^{n}) $ follows from the same theorem.
Further, the estimation of Theorem 5.1 in \cite{tang2003} implies that $\Psi(\cdot)\in L^{p,2}_{\FF}(\RR^n)$, $\forall\ p>0$.
	Let $\big(X(\cdot), Y(\cdot), Z(\cdot)\big)$ be the unique solution of (\ref{linear1}). Define 
	$$
	\phi(s) = Y(s)-\Sigma(s)X(s),
	$$
	and
	$$
		\psi  = Z-\Bigg\{ \Sigma  \Big( CX- DR^{-1}\(B^{\top}Y+D^{\top}Z\)+C_1\alpha\Big)+\Psi X\Bigg\}.
		$$
		Applying It\^o's formula to $\phi(\cdot)$, it is easy to check that $\(\phi(\cdot), \psi(\cdot)\)$ is a solution of  (\ref{base211}). Also, we can check that $\big(\phi (\cdot), \psi(\cdot)\big) \in L^{2,c}_{\FF}(\mathbb{R}^n) \times L^{p,2}_{\FF}( \mathbb{R}^{n}) $, $\forall\ p\in(0,1)$ which follows from
\begin{eqnarray*}
		&&\EE\(\int^T_0|\Psi(s)X(s)|^2ds\)^{p}\\
		&\le&\EE\(\sup_{s\le T}|X(s)|^{2p}\(\int^T_0|\Psi(s)|^2ds\)^{p}\)\\
		&\le&\(\EE\(\sup_{s\le T}|X(s)|^2\)\)^{p}
		\(\EE\(\int^T_0|\Psi(s)|^2ds\)^{p/(1-p)}\)^{1-p}<\infty.
\end{eqnarray*}
		
		To prove the uniqueness of the solution to (\ref{base211}) and (\ref{xu3}), let $\(X(\cdot),\phi(\cdot),\psi(\cdot)\)$ in
		 $\(L^{2,c}_\FF(\RR^n)\)^2\times L^{p,2}_\FF(\RR^n)$ be a solution of (\ref{base211}) and (\ref{xu3}). Let $Y=\Sigma X+\phi$ and $$Z=(I+\Sigma DR^{-1}D^{\top})^{-1}\((\Sigma C+\Psi)X-\Sigma DR^{-1}Y+\Sigma C_1\al+\psi\).$$ Then, for any $p\in(0,1)$,  $\(X(\cdot),Y(\cdot),Z(\cdot) \)  \in\(L^{2,c}_\FF(\RR^n)\)^2\times L^{p,2}_\FF(\RR^n)$ is a solution to FBSDE (\ref{linear1}). 
		
		To prove that $Z(\cdot)\in L^2_\FF(\RR^n)$, we define
		\[\tau_k=\inf\left\{t>0:\ \int^t_0|Z(s)|^2ds\ge k\right\}.\]
		Applying It\^o's formula to $|Y(\cdot)|^2$, we have
		\[		\EE|Y(T\wedge\tau_k)|^2-|Y(0)|^2
		=-2\EE\int^{T\wedge\tau_k}_0\<Y,A^\top Y+C^\top Z+QX+\la\>ds+\EE\int^{T\wedge\tau_k}_0|Z|^2ds.\]
		Thus,
		\[
		\EE\int^{T\wedge\tau_k}_0|Z|^2ds\le\EE|Y(T\wedge\tau_k)|^2+K\EE\int^{T\wedge\tau_k}_0\(|Y|^2+|X|^2+|\la|^2\)ds
		+\frac12\EE\int^{T\wedge\tau_k}_0|Z|^2ds.\]
		Hence
		\[\EE\int^{T\wedge\tau_k}_0|Z|^2ds\le 2\EE|Y(T\wedge\tau_k)|^2+K\EE\int^{T\wedge\tau_k}_0\(|Y|^2+|X|^2+|\la|^2\)ds.\]
		Taking $k\to\infty$ on both sides, we obtain that
		\[\EE\int^{T}_0|Z|^2ds\le 2\EE|Y(T)|^2+K\EE\int^{T}_0\(|Y|^2+|X|^2+|\la|^2\)ds<\infty.\]
		Therefore, $\(X(\cdot),Y(\cdot),Z(\cdot) \) \in\(L^{2,c}_\FF(\RR^n)\)^2\times L^2_\FF(\RR^n)$ is a solution to FBSDE (\ref{linear1}). 		
		The uniqueness of $\(\phi(\cdot),\psi(\cdot)\)$ follows from that of $\(X(\cdot),Y(\cdot),Z(\cdot) \) $ in Lemma \ref{the3.4}.
				\end{proof}

The following theorem follows from Lemmas \ref{the3.4}, \ref{lem1115a} and \ref{lem0510a} directly.

\begin{theorem}\label{theorem371}
	Under Assumptions (H1) and (H2), the optimal control $u^*_{\al}(\cdot)$  of Problem 1 is given by 
	\begin{equation}
		\begin{aligned}\label{uu}
			u  ^*_{\al} =-\left(R+D^{\top} \Sigma D)^{-1}\left\{\left(B^{\top} \Sigma+D^{\top} \Sigma C+D^{\top} \Psi\right) \hat X+B^{\top} \hat \phi+D^{\top} \Sigma C_1 \alpha+D^{\top} \hat \psi\right\}\right.,
		\end{aligned}
	\end{equation}
	where $\hat X(\cdot)$ is the unique solution to (\ref{xu3}) (with $\big(\phi(\cdot), \psi(\cdot)\big) $ there replaced by $(\hat \phi(\cdot),\hat  \psi(\cdot))$), $\big(\Sigma(\cdot),  \Psi(\cdot)\big)$ is the unique solution of (\ref{R11}), and  $\big(\hat \phi(\cdot), \hat  \psi(\cdot)\big) $  is the unique solution of  (\ref{base211}) with $\lambda(\cdot)$ there  replaced by an arbitrary solution of  (\ref{xu5}).
\end{theorem} 

\begin{remark}
	Lemma \ref{lem0510a} and Theorem \ref{theorem371} have appeared in \cite{KT} for more general non-homogeneous terms. 
The conclusion there is even stronger than the current paper. More specifically, it was proved there that $\psi(\cdot) \in  L^{1,2}_{\FF}( \mathbb{R}^{n}) $. We include these results here for the reason that the problem here  involves  a special type  of nonhomogeneous terms, and our proof does not involve BMO-martingale theory which is quite technical.
\end{remark}
		 	 
		 	 \section{Optimal mean-field }\label{sec6}
 \setcounter{equation}{0}
\renewcommand{\theequation}{\thesection.\arabic{equation}}

In this section,  we present the proofs of Lemma \ref{lem1116b} and Theorem 
\ref{main0}. 

{\em Proof of Lemma \ref{lem1116b}}:
 Recalling the equations (\ref{xiong}, \ref{xiong2}, \ref{xiong3}),  the cost functional $\hat	J_{\alpha}\(u^*_\al (\cdot)\) $  can be written as 
\begin{equation}
	\begin{aligned}
	\hat	J_{\alpha}\(u^*_\al (\cdot)\) =&\;\mathbb{E} \Big\{ \int_{0}^{T}\Big(\langle QX,X\rangle+\langle Q_1\alpha,\alpha\rangle 
		+ \langle Ru^*_\al , u^*_\al \rangle   \Big)ds
		+	\langle GX(T),X(T)\rangle 	\Big\}\\
		=& \mathbb{E} \Big\{ \int_{0}^{T}\Big(\langle Q( \mathcal{P}_1x   +\mathcal{K}_1 \alpha), \mathcal{P}_1x   +\mathcal{K}_1 \alpha \rangle+\langle Q_1\alpha,\alpha\rangle \\
		&+ \langle R(\mathcal{P}_2x   +\mathcal{K}_2 \alpha) , \mathcal{P}_2x   +\mathcal{K}_2 \alpha\rangle   \Big)ds
		+	\langle G(\mathcal{P}_3x   +\mathcal{K}_3 \alpha), \mathcal{P}_3x   +\mathcal{K}_3\alpha \rangle 	\Big\} \\
		=&   \langle  (\mathcal{K}^*_1 Q  \mathcal{K}_1 + \EE Q_1 + \mathcal{K}^*_2R \mathcal{K}_2+ \mathcal{K}^*_3G \mathcal{K}_3) \al, \alpha \rangle_{\LL^2} \\
	&	+ 2 \langle ( \mathcal{K}^*_1 Q  \mathcal{P}_1   +\mathcal{K}^*_2 R \mathcal{P}_2+\mathcal{K}^*_3 G \mathcal{P}_3 ) x, \alpha\rangle_{\LL^2} \\
	&+  \langle  (\mathcal{P}^*_1 Q  \mathcal{P}_1 + \mathcal{P}^*_2R \mathcal{P}_2+ \mathcal{P}^*_3G \mathcal{P}_3) x, x \rangle_{\RR^n} .
	\end{aligned}			
\end{equation}
 Therefore,  applying Lemma \ref{lem1114a},  $\alpha^*(\cdot)$ is optimal for Problem 2 if and only if 
 $$
 (\mathcal{K}^*_1 Q  \mathcal{K}_1 +\EE  Q_1 + \mathcal{K}^*_2R \mathcal{K}_2+ \mathcal{K}^*_3G \mathcal{K}_3) \al ^* + ( \mathcal{K}^*_1 Q  \mathcal{P}_1   +\mathcal{K}^*_2 R \mathcal{P}_2+\mathcal{K}^*_3 G \mathcal{P}_3  ) x=  0.
 $$
We then obtain the conclusions of the Lemma  \ref{lem1116b}.
		\qed

		 	Finally, we present the
		 	
			{\em Proof of Theorem \ref{main0}}: It is easy to show that
		\[\inf_{u(\cdot) \in \mathcal{U}}J\big( u(\cdot)\big)= \inf_{\alpha(\cdot) \in \LL^2_0  }\;\; \inf_{u:\ \mathbb{E}X^u =\alpha} J\big( u(\cdot)\big).\]

		This justifies the decomposition of the original problem into Problems 1 and 2. The conclusion of Theorem  \ref{main0}
		then follows from those of Lemmas \ref{lemma3.3}, \ref{lem1115a},  \ref{lem1116b} and \ref{lem1114a}. 
		\qed

\section{Conclusion}\label{sec7}
\setcounter{equation}{0}
\renewcommand{\theequation}{\thesection.\arabic{equation}}

In this paper, we studied the MFSLQ control problem with random coefficients. We first established the existence and uniqueness of the optimal control $u^*(\cdot)$. Further, we characterized $u^*(\cdot)$ by an optimality system. However, this system contains terms such as $\EE[A_1(\cdot)^{\top}{Y}^*(\cdot)]$ 
which makes the explicit solution  very difficult to obtain. To overcome this hurdle, we decompose the original MFSLQ problem into Problems 
1 and 2.  Problem 1 is a usual SLQ control problem with the constraint that $\EE[X(t)]=\al(t)$ for all $t\in[0,T]$, where $\al(\cdot)$ is a deterministic function on $[0,T]$. We  introduced an ELM $\la(\cdot)$  to relax the constraint. Consequently, Problem 1 is reduced to a problem 
with control variables $u(\cdot)$ and $\la(\cdot)$ without any constraint for each fixed $\al(\cdot)$. After solving Problem 1,  Problem 2 becomes  a usual SLQ control problem with deterministic control variable $\al(\cdot)$.   
  We believe that the ELM method developed in this article can be used in other mean-field control or game problems. 
  For example, we can consider the
  conditional mean-field LQ control problem studied by \cite{Pham2016} and \cite{MWY} with random coefficients without assuming the coefficients to be 
  measurable with respect to the given filtration.

\vspace{1cm}

{\em Acknowledgement}: We would like to thank the AE and three anonymous referees for constructive suggestions which improve this article 
substantially. Especially, we thank the third referee who told us the reference \cite{KT} which we did not know when working on this project.
We also like to thank Zuoquan Xu from Hong Kong PolyU for fruitful discussions.

\end{document}